\newcommand{\R}{\mathbb{R}} 
\newtheorem{definition}{Definition}
 \newtheorem{lemma}{Lemma}
\newtheorem{proposition}{Proposition}
\newtheorem{theorem}{Theorem}
\newcommand{\df}{\stackrel{\mathrm{def}}{=}} 
\newcommand{\REF}[1]{{\normalfont (\ref{#1})}} 
\newcommand{\virg}[1]{``#1''} 
\renewcommand{\qed}{} 
\newcounter{UNI}
\newcounter{UNIp}
\begin{document}

\title{On the canonical connection for smooth envelopes}
\author{Giovanni Moreno}
\address{Mathematical Institute in Opava,\\
Silesian University in Opava,\\
Na Rybnicku 626/1, 746 01 Opava, Czech Republic.}
\email{Giovanni.Moreno@math.slu.cz}

\maketitle

 \begin{abstract}
A notion known as smooth envelope, or superposition closure, appears naturally in several approaches to generalized smooth manifolds which were proposed in the last decades. Such an operation is indispensable in order to perform differential calculus. A derivation of the enveloping algebra can be restricted to the original one, but it is a delicate question if the the vice--versa can be done as well. In a physical language, this would correspond to the existence of a canonical connection. In this paper we show an example of an algebra which always possesses such a connection.
\end{abstract}

\subjclass{58A40, 13N15, 53B05}
\tableofcontents
\section*{Introduction}

The process of generalizing   differential calculus to commutative algebras---and even more general objects---began almost one century ago (probably with the work of K\"ahler in the thirties, see \cite{MR0238823}), and, besides aesthetic achievements, it introduced a lot of powerful tools for mathematical and theoretical  physics (see, e.g., \cite{Sarda, Nestruev}). The Polish school (see, e.g., \cite{MR1156977,MR2140786} and references therein) initiated by Sikorski \cite{MR0482794} in the seventies led to the notions of \emph{differential space}, and \emph{superposition closure}, which parallels the \emph{smooth envelope} of the later Russian school (see \cite{Nestruev}, \textsection 3.38). Also the   work of Michor \cite{MR583436} concerning manifolds of mappings can be framed in this development. 

The classical theory of smooth manifolds can nowadays be seen as a sub--theory of commutative algebra, since any smooth manifold is the spectrum of a suitable algebra (henceforth called \emph{smooth}, following \cite{Nestruev}). This observation is rather old, and can be traced back to Gel'fand and Kolmogorov (1939, see \cite{MR0407579}), though some call it \virg{the Milnor\&Stasheff exercise} (see \cite{MR0440554}) or \virg{Spectral Theorem} (see \cite{Nestruev}). Undoubtedly, a significant impulse in this direction was given by the parallel work of Groethendieck in algebraic geometric context (see \cite{MR0173675}, Chapter 20, \emph{D\'erivations at diff\'erentielles}) during the sixties. Similarly, the theory of vector bundle has become a part of the theory of projective modules, thanks to the celebrated Swan--Serre theorem \cite{MR0143225} (see also \cite{Nestruev}, Chapter 12, for an alternative proof). The reader may find useful references in \cite{MR1289393}.

This paper stems from the following elementary observation:  the spectrum of the tensor product of two algebras over the same ground field is the cartesian product of their spectra. But smooth function algebra on the cartesian product of two smooth manifolds, say, $M$ and $N$, does not coincide with the tensor product (over $\R$) of their respective smooth functions algebras. Indeed, $C^\infty(M)\otimes_\R C^\infty(N)$ is a proper subalgebra of $C^\infty(M\times N)$ and the latter is the  {smooth envelope} of the former. The main result (Theorem \ref{thExtSmEnv}) states that the inclusion  $C^\infty(M)\otimes_\R C^\infty(N)\subset C^\infty(M\times N)$ is equipped with a canonical connection. \par
As a technical preparation, we propose an obvious module--theoretic generalization of the  notion of   smooth envelope, thus   called   \emph{smoothening}  procedure (\ref{subsubAllisciamentoDiModuli}),  and show how differential forms behave under such operation.\par
Throughout this paper, $A$ is an algebra such that $\overline{A}$ is  smooth. In other words, the common spectrum of $A$ and $\overline{A}$ is a smooth manifold  $|A|$, whose smooth function algebra is $\overline{A}$.  In particular, this means that $A$ is constituted of functions on its spectrum, i.e., it is geometric.

\section{Preparatory results}
%
%
\subsection{Smooth envelopes}
Elements of $\overline{A}$ are of the form $H\circ \bm{a}$, where $\bm{a}\in A^k$ is a $k$--tuple of elements of $A$, understood as functions on $|A|$, and $H\in C^\infty(\R^k)$. The $\R$--algebra structure in $\overline{A}$ is given by:
\begin{eqnarray*}
r\cdot(H\circ \bm{a})&=&(rH)\circ \bm{a},\\
H\circ \bm{a}+H^\prime\circ \bm{a}^\prime&=&(H+H')\circ (\bm{a}\oplus\bm{a'}),\\
(H\circ \bm{a})\cdot(H'\circ \bm{a'})&=&(H\cdot H')\circ (\bm{a}\oplus\bm{a'}),
\end{eqnarray*}
where $r\in\R$,  $H\in C^\infty(\R^k)$, $H'\in C^\infty(\R^{k'})$, $\bm{a}\in A^k$, $\bm{a'}\in A^{k'}$, and $\bm{a}\oplus\bm{a'}$ belongs to $A^{k}\oplus A^{k'}=A^{k+k'}$. Functions $H+H'$ and $H\cdot H'$ are understood to be smooth functions on   $\R^{k+k'}$, due to natural embeddings of $C^\infty(\R^k)$ and $C^\infty(\R^{k'})$ into $C^\infty(\R^{k+k'})$.
 \subsection{Smoothening of modules}\label{subsubAllisciamentoDiModuli}
If $P$ is an $A$--module, then $P$ can be understood as the module of sections of the pseudobundle   $|P|\longrightarrow|A|$ (we use the same terminology as \cite{Nestruev}). Therefore, it seems natural to extend the scalar multiplication of sections to the elements of   $\overline{A}$. 
\begin{definition}\label{defAllisciamentoDiModuli}
The $\overline{A}$--module 
\begin{equation}
\overline{P}\df\overline{A}\otimes_AP
\end{equation}
is called the \emph{smoothening} of the $A$--module $P$.
\end{definition}
Thus, the smoothening of $P$   only results in an enlargement of the algebra of  scalars,  but does not affect the geometry  of the corresponding bundle, as  Lemma \ref{LemSmothLocFree} below shows. 
\begin{lemma}\label{LemSmothLocFree}
If an $A$--module $P$ is locally free over a cover $\mathcal{U}$ of $|A|$, the same holds for its smoothening $\overline{P}$. Moreover, local dimension is preserved.
\end{lemma}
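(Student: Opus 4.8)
The plan is to reduce the claim to the elementary fact that the smoothening operation commutes, up to a natural isomorphism, with restriction to the open sets of a cover; the local triviality of $\overline P$ will then follow at once from that of $P$. Recall that for an open $U\subseteq|A|$ the restriction $P|_U$ is the localisation $A|_U\otimes_A P$, where $A|_U$ denotes the algebra of smooth functions on $U$, and that by hypothesis, for every $U\in\mathcal U$, there are an integer $k_U$ and an $A|_U$--linear isomorphism $P|_U\cong(A|_U)^{k_U}$.

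First I would check the compatibility of the smooth envelope with such restrictions, namely $\overline{A}\big|_U=\overline{A|_U}$: indeed both sides equal $C^\infty(U)$, the former because $\overline A=C^\infty(|A|)$ and restriction of smooth functions to the open set $U$ produces exactly $C^\infty(U)$, the latter because $A|_U$ is again geometric with smooth envelope $C^\infty(U)$. Then, from $\overline{P}\big|_U = A|_U\otimes_A(\overline A\otimes_A P) = \overline A\big|_U\otimes_A P$ and transitivity of the tensor product,
\begin{equation*}
\overline P\big|_U \;=\; \overline A\big|_U\otimes_A P \;=\; \overline A\big|_U\otimes_{A|_U}\bigl(A|_U\otimes_A P\bigr) \;=\; \overline{A|_U}\otimes_{A|_U}P\big|_U ,
\end{equation*}
so that the restriction to $U$ of the smoothening of $P$ is the smoothening of the restricted module $P|_U$.

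Plugging in the local triviality of $P$ and using Definition~\ref{defAllisciamentoDiModuli},
\begin{equation*}
\overline P\big|_U \;\cong\; \overline{A|_U}\otimes_{A|_U}(A|_U)^{k_U} \;\cong\; \bigl(\overline{A|_U}\bigr)^{k_U} \;=\; \bigl(C^\infty(U)\bigr)^{k_U},
\end{equation*}
which is free of rank $k_U$ over $\overline A\big|_U$; concretely, if $p_1,\dots,p_{k_U}\in P$ restrict to a basis of $P|_U$, then the elements $1\otimes p_1,\dots,1\otimes p_{k_U}$ of $\overline P$ restrict to a basis of $\overline P\big|_U$. Hence $\overline P$ is locally free over the very same cover $\mathcal U$, with unchanged local dimensions.

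The step I expect to be the main obstacle is precisely the compatibility $\overline A\big|_U=\overline{A|_U}$, together with the subtler point that localisation along the open sets of $\mathcal U$ interacts well with the a priori non-exact functor $\overline A\otimes_A-$; once this is granted, everything else is a formal manipulation of tensor products. In the present smooth setting this is ensured by the standing assumption that $|A|$ is a genuine manifold, so that the algebras in play are the honest function algebras $C^\infty(U)$ and restriction to an open set is a flat (localisation) morphism.
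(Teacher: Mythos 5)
Your proposal is correct and takes essentially the same route as the paper: both arguments reduce the lemma to the fact that smoothening commutes with restriction/localization over each $U\in\mathcal U$, the paper proving $\overline{S}_U^{-1}\overline{P}\cong(\overline{S}_U^{-1}\overline{A})\otimes_{S_U^{-1}A}S_U^{-1}P$ by exhibiting explicit mutually inverse maps where you invoke associativity of the tensor product together with the identification $\overline{A}\big|_U=\overline{A|_U}=C^\infty(U)$. After that commutation is in hand, both proofs conclude identically by transporting the free basis of $P|_U$.
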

\begin{proof}
Let $S_U$ (resp. $\overline{S}_U$) the multiplicative subset of $A$ (resp. $\overline{A}$) determined by $U\in\mathcal{U}$ (for the notation we follow \cite{Nestruev}). It is easy to verify that $\overline{S}_U^{-1}\overline{P}$ is isomorphic to $(\overline{S}_U^{-1}\overline{A})\otimes_{S_U^{-1}A}S_U^{-1}P$. Indeed maps
\begin{eqnarray}
\overline{S}_U^{-1}\overline{P} & \longrightarrow & (\overline{S}_U^{-1}\overline{A})\otimes_{S_U^{-1}A}S_U^{-1}P\\
\frac{f\otimes p}{g} & \longmapsto & \frac{f}{g}\otimes \frac{p}{1}\nonumber
\end{eqnarray}
and
\begin{eqnarray}
(\overline{S}_U^{-1}\overline{A})\otimes_{S_U^{-1}A}S_U^{-1}P &\longrightarrow &\overline{S}_U^{-1}\overline{P}   \\
\frac{f}{g}\otimes\frac{p}{h} & \longmapsto & \frac{f\otimes p}{gh}\nonumber
\end{eqnarray}
are well defined and inverse one to another.\par
Therefore, if $S_U^{-1}P$ is a free $S_U^{-1}A$--module, then $\overline{S}_U^{-1}\overline{P}$ is also free as $\overline{S}_U^{-1}\overline{A}$--module. Since this is true for any $U\in\mathcal{U}$, the result is proved.\qed
\end{proof}
\subsection{Smoothened tensor product}
Let  $P$ be a $C^\infty(M)$--module, and $Q$ a $C^\infty(N)$--module.   Next definition generalizes    the tensor product $\otimes_\R$, so that the multiplication of $P$ and $Q$ results into a $C^\infty(M\times N)$--module, rather than a $C^\infty(M)\otimes_\R C^\infty(N)$--module.
\begin{definition} The $C^\infty(M\times N)$--module
\begin{equation}
P\overline{\otimes}_\R Q\df\overline{P\otimes_\R Q}
\end{equation}
is called the \emph{smoothened tensor product} of $P$ and $Q$.
\end{definition}
\begin{proposition}\label{propSmoothenedTenProd}
The following isomorphisms holds:
\begin{eqnarray}
C^\infty(M)\overline{\otimes}_\R Q&=&C^\infty(M\times N)\otimes_{C^\infty(N)} Q\\
P\overline{\otimes}_\R C^\infty(N)&=&P\otimes_{C^\infty(M)}C^\infty(M\times N)
\end{eqnarray}
\end{proposition}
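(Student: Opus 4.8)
The plan is to prove the first isomorphism
\[
C^\infty(M)\,\overline{\otimes}_\R\, Q \;=\; C^\infty(M\times N)\otimes_{C^\infty(N)} Q,
\]
the second one following by the symmetric argument (exchanging the roles of $M$ and $N$). By definition the left-hand side is $\overline{C^\infty(M)\otimes_\R Q}= C^\infty(M\times N)\otimes_{C^\infty(M)\otimes_\R C^\infty(N)}\bigl(C^\infty(M)\otimes_\R Q\bigr)$, where $C^\infty(M\times N)$ is regarded as a $C^\infty(M)\otimes_\R C^\infty(N)$--algebra via the smooth-envelope inclusion. So the statement reduces to a purely algebraic identity among tensor products: I want to show
\[
C^\infty(M\times N)\otimes_{C^\infty(M)\otimes_\R C^\infty(N)}\bigl(C^\infty(M)\otimes_\R Q\bigr)\;\cong\;C^\infty(M\times N)\otimes_{C^\infty(N)} Q .
\]

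The key step is the standard "base-change / transitivity" manipulation of tensor products. Write $B\df C^\infty(M)\otimes_\R C^\infty(N)$ and $E\df C^\infty(M\times N)$. Then $C^\infty(M)\otimes_\R Q$ is an $B$--module, and as such it is visibly $B\otimes_{C^\infty(N)}Q$ (just reassociate $C^\infty(M)\otimes_\R C^\infty(N)\otimes_{C^\infty(N)}Q\cong C^\infty(M)\otimes_\R Q$, using that $C^\infty(N)\otimes_{C^\infty(N)}Q\cong Q$). Therefore
\[
E\otimes_B\bigl(C^\infty(M)\otimes_\R Q\bigr)\;\cong\;E\otimes_B\bigl(B\otimes_{C^\infty(N)}Q\bigr)\;\cong\;E\otimes_{C^\infty(N)}Q,
\]
where the last isomorphism is the cancellation $E\otimes_B(B\otimes_{C^\infty(N)}-)\cong E\otimes_{C^\infty(N)}-$, valid because the composite ring map $C^\infty(N)\to B\to E$ is exactly the map $C^\infty(N)\to C^\infty(M\times N)$ induced by the projection $M\times N\to N$. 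I would spell out the isomorphism on elementary tensors, $H\otimes_B(f\otimes_\R q)\longmapsto (f\cdot H)\otimes_{C^\infty(N)} q$, with inverse $H\otimes_{C^\infty(N)}q\longmapsto H\otimes_B(1\otimes_\R q)$, and check well-definedness and mutual inverseness, which is routine.

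The only genuine content beyond formal nonsense is making sure the $C^\infty(N)$--module structure that emerges on the right really is the intended one, i.e. that the pullback along $\pi_N\colon M\times N\to N$ agrees with the composite $C^\infty(N)\hookrightarrow C^\infty(M)\otimes_\R C^\infty(N)\hookrightarrow C^\infty(M\times N)$; this is immediate from the description of the $\R$--algebra structure on the smooth envelope recalled above (the embedding sends $b\in C^\infty(N)$ to $b\circ\bm{a}$ with $\bm{a}$ the coordinate projections, which is precisely $b\circ\pi_N$). I expect the main — though still modest — obstacle to be bookkeeping: keeping straight over which ring each tensor product is taken, and verifying the maps are well defined with respect to the correct bilinearity relations. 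No deep input (not even Lemma \ref{LemSmothLocFree}) is needed; it is a transitivity-of-base-change computation packaged through Definition \ref{defAllisciamentoDiModuli}.
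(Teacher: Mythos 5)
Your proof is correct and takes essentially the same route as the paper's: the explicit assignments $H\otimes(f\otimes q)\mapsto (fH)\otimes q$ and $H\otimes q\mapsto H\otimes(1\otimes q)$ that you spell out are exactly the mutually inverse homomorphisms the paper exhibits. The only difference is that you additionally package them as an instance of transitivity of base change ($E\otimes_B(B\otimes_{C^\infty(N)}Q)\cong E\otimes_{C^\infty(N)}Q$), which the paper leaves implicit.
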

\begin{proof}
Just using the definition of smoothened tensor product, one sees that the assignments $H_{ij}\otimes(f^i\otimes q^j)\longmapsto (H_{ij}f^i)\otimes q^j$ and $H_i\otimes q^i\mapsto H_i\otimes(1_{C^\infty(M)}\otimes q^i)$, from $C^\infty(M\times N)\otimes_{C^\infty(M)\otimes_\R C^\infty(N)}\left(C^\infty(M)\otimes_\R Q\right)$ to $C^\infty(M\times N)\otimes_{C^\infty(N)} Q$ and viceversa,  are well--defined module homomorphisms inverse one of the other. Similarly for the second relation.\qed
\end{proof} 
\section{Canonical connection in the smooth envelope}
\subsection{Derivations along a  smooth envelope}
Let $P$ be an $\overline{A}$--module. The algebra morphism $\iota:A\subseteq\overline{A}$ allows to regard $P$ as an $A$--module $P_\iota$.
\begin{definition}
 An element  of the $A$--module $D(A,P_\iota)$  is a \emph{derivation of $A$ along its smooth envelope}.
\end{definition}
Since  $D(A,P_\iota)$ is also an $\overline{A}$--module with operation $(\overline{a}\cdot X)(a)\df \overline{a}X(a)$,   $\overline{a}\in\overline{A}$, $X\in D(A,P_\iota)$, $a\in A$, the correspondence $P \longmapsto D(A,P_\iota)$ is   an endofunctor in the category of $\overline{A}$--modules.
\begin{proposition}
 The restriction of   derivations of $\overline{A}$   to the subalgebra $A$ defines a surjective natural transformation
 \begin{equation}\label{EqProjFunct}
\Pi: D(\overline{A},\cdot\,) \longrightarrow D(A,\,(\cdot)_\iota\,)
\end{equation}
of endofunctors in the category of $\overline{A}$--modules.
\end{proposition}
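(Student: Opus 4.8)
The plan is to treat the three things hidden in the statement separately: $\Pi$ is pointwise well defined, it is natural, and each component is onto; the first two are bookkeeping and the last is the real content. For a fixed $\overline{A}$--module $P$ and $X\in D(\overline{A},P)$, the composite $X\circ\iota$ is $\R$--linear and, for $a,b\in A$, satisfies $(X\circ\iota)(ab)=X(ab)=\iota(a)X(b)+\iota(b)X(a)=a\cdot_\iota(X\circ\iota)(b)+b\cdot_\iota(X\circ\iota)(a)$, so $\Pi_P(X):=X\circ\iota$ lands in $D(A,P_\iota)$ and is visibly $\overline{A}$--linear; naturality is the identity $\phi\circ(X\circ\iota)=(\phi\circ X)\circ\iota$ for $\phi$ a morphism of $\overline{A}$--modules. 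So it remains to produce, for every $P$ and every $Y\in D(A,P_\iota)$, a derivation of $\overline{A}$ restricting to $Y$.

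For that I would write down the Hadamard/chain--rule candidate
\[
X(H\circ\bm a):=\sum_{i=1}^{k}(\partial_iH\circ\bm a)\cdot Y(a_i),\qquad H\in C^\infty(\R^k),\ \bm a\in A^k,
\]
where the coefficients $\partial_iH\circ\bm a$ lie in $\overline{A}$ and act on the $\overline{A}$--module $P$ while $Y(a_i)\in P_\iota=P$. Granting well--definedness, the remaining verifications are routine: additivity and $\R$--linearity follow by merging tuples and using the description of the algebra operations of $\overline{A}$ recalled in Section~1.1; the Leibniz rule over $\overline{A}$ follows the same way from $\partial_i(HH')=(\partial_iH)H'+H(\partial_iH')$, once one observes that after merging $H$ involves only the first block of variables and $H'$ only the second; and $X\circ\iota=Y$ because $a=x_1\circ(a)$ forces $X(a)=(\partial_1x_1\circ(a))\,Y(a)=Y(a)$. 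Hence $\Pi_P(X)=Y$.

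The hard part is well--definedness of $X$. If $H\circ\bm a=H'\circ\bm a'$ one sets $\bm b=\bm a\oplus\bm a'\in A^n$ and inflates $H,H'$ to functions on $\R^n$ depending only on the respective blocks; since the partials with respect to the complementary block vanish, the two candidate values equal $X(\widetilde H\circ\bm b)$ and $X(\widetilde H'\circ\bm b)$, so it is enough to prove: if $G\in C^\infty(\R^n)$ and $G\circ\bm b=0$ in $\overline{A}$, then $\sum_i(\partial_iG\circ\bm b)\,Y(b_i)=0$ in $P$ (apply this to $G=\widetilde H-\widetilde H'$). Differentiating the identity $G\circ\bm b=0$ in any chart of the manifold $|A|$ shows that the $1$--form $\sum_i(\partial_iG\circ\bm b)\,\dd b_i$ vanishes in $\Lambda^1(\overline{A})$. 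Writing $Y$ through the universal derivation as $Y=\varphi\circ\dd_A$ with $\varphi:\Lambda^1(A)\to P_\iota$ an $A$--module map, and extending scalars to $\overline\varphi:\overline{A}\otimes_A\Lambda^1(A)\to P$, one gets $\sum_i(\partial_iG\circ\bm b)Y(b_i)=\overline\varphi\bigl(\sum_i(\partial_iG\circ\bm b)\otimes\dd_Ab_i\bigr)$; so the claim follows as soon as one knows that the element $\sum_i(\partial_iG\circ\bm b)\otimes\dd_Ab_i$ of $\overline{A}\otimes_A\Lambda^1(A)=\overline{\Lambda^1(A)}$ is already zero, i.e.\ that a relation detected in $\Lambda^1(\overline{A})$ is already a relation in the smoothening $\overline{\Lambda^1(A)}$. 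This is exactly the behaviour of differential forms under smoothening announced in the Introduction; I would establish it beforehand, using Hadamard's lemma together with the fact that near every point of $|A|$ one may pick coordinates among the elements of $A$ (their differentials span each cotangent space because $A$ generates $\overline{A}$ as a smooth algebra).

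I expect controlling the kernel of the canonical map $\overline{A}\otimes_A\Lambda^1(A)\to\Lambda^1(\overline{A})$ to be the only genuine obstacle. Note that this map is also surjective, since $\dd_{\overline{A}}(H\circ\bm a)=\sum_i(\partial_iH\circ\bm a)\,\dd_{\overline{A}}a_i$ lies in its image; under the identification $D(-,P)=\Hom(\Lambda^1(-),P)$, the transformation $\Pi$ is precomposition with this map, so surjectivity of every $\Pi_P$ is equivalent to the map admitting a left inverse, and, being onto, to its being an isomorphism. In other words, the Proposition is, after unwinding, the assertion that smoothening commutes with $\Lambda^1$, and the cleanest route is to prove that compatibility first and then read off the statement.
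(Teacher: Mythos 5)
The paper states this proposition without proof, so there is no argument of the author's to set against yours; I can only judge the attempt on its own terms. Your first two steps (that $X\mapsto X\circ\iota$ lands in $D(A,P_\iota)$, is $\overline{A}$--linear, and is natural) are correct and routine. The gap is in the third step. The chain--rule lift $X(H\circ\bm{a})=\sum_i(\partial_iH\circ\bm{a})\,Y(a_i)$ is the right candidate, but, as you yourself isolate, its well--definedness reduces to the vanishing of $\sum_i(\partial_iG\circ\bm{b})\otimes \dd b_i$ in $\overline{\Lambda^1(A)}=\overline{A}\otimes_A\Lambda^1(A)$ whenever $G\circ\bm{b}=0$, i.e.\ to the injectivity of the canonical map $\varphi:\overline{\Lambda^1(A)}\to\Lambda^1(\overline{A})$. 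You defer this to a lemma \virg{to be established beforehand}, but that lemma is not a preliminary: since $\varphi$ is surjective for any $A$ with smooth envelope (by the same chain rule), its injectivity is exactly its invertibility, which is the content of Theorem \ref{thExtSmEnv}---proved in the paper only for $A=C^\infty(M)\otimes_\R C^\infty(N)$, by comparing ranks of the two vector bundles---and which the paper explicitly asserts fails for general $A$. Your sketch via Hadamard's lemma and coordinates chosen inside $A$ does not close this: it shows the relation holds in $\Lambda^1(\overline{A})$, not that it already holds upstairs in $\overline{A}\otimes_A\Lambda^1(A)$, and that distinction is the whole difficulty.

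Your closing reduction---that surjectivity of every component $\Pi_P$ is, by Yoneda applied to $P=\overline{\Lambda^1(A)}$, equivalent to $\varphi$ being a split monomorphism, hence (being already epi) an isomorphism---is correct, and it exposes a genuine tension in the paper itself: read literally, the Proposition would make Theorem \ref{thExtSmEnv} automatic for every $A$ and contradict the remark that the canonical connection does not exist in general. So either \virg{surjective} is intended here in some weaker sense, or the Proposition is overstated; on either reading, a proof along your lines cannot be completed at this point of the paper without assuming its main theorem.
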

Notice that the representative object of   functor $D(A,\,(\cdot)_\iota\,)$ is precisely the smooth\-ening $\overline{\Lambda^1(A)}=\overline{A}\otimes_A\Lambda^1(A)$ of the $A$--module $\Lambda^1(A)$.
\begin{lemma}
 Natural transformation $\Pi$ defined by \REF{EqProjFunct} is dual to the   $\overline{A}$--module homomorphism
\begin{equation}\label{EqProjFunctDual}
\varphi: \overline{\Lambda^1(A)}\longrightarrow \Lambda^1(\overline{A}),
\end{equation}
$\varphi(\overline{a}\otimes da) \df \overline{a}da$, $\overline{a} \in \overline{A}$, $a \in A \subset \overline{A}$.
\end{lemma}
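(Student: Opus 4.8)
The plan is to unwind the definitions of the two functors and show that $\Pi$ is precisely the natural transformation obtained by applying $\Hom_{\overline{A}}(-,P)$ (equivalently, the functor $D(\overline{A},P)=\Hom_{\overline{A}}(\Lambda^1(\overline{A}),P)$, $D(A,P_\iota)=\Hom_{\overline{A}}(\overline{\Lambda^1(A)},P)$, using that $\overline{\Lambda^1(A)}$ represents $D(A,(\cdot)_\iota)$ as noted just above) to the map $\varphi$. Concretely, first I would fix an $\overline{A}$--module $P$ and write down the isomorphisms
\begin{equation*}
D(\overline{A},P)\cong\Hom_{\overline{A}}(\Lambda^1(\overline{A}),P),\qquad D(A,P_\iota)\cong\Hom_{\overline{A}}(\overline{\Lambda^1(A)},P),
\end{equation*}
the first being the universal property of $\Lambda^1(\overline{A})$, the second being the representability statement recalled in the remark preceding the lemma (an element of $D(A,P_\iota)$ is an $\R$--linear $X:A\to P$ satisfying Leibniz, which factors uniquely through the universal $A$--derivation $d:A\to\Lambda^1(A)$ and then extends $\overline{A}$--linearly to $\overline{\Lambda^1(A)}=\overline{A}\otimes_A\Lambda^1(A)$).

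Next I would check that under these identifications the restriction map $\Pi_P$ sending a derivation $Y\in D(\overline{A},P)$ to $Y|_A$ corresponds exactly to precomposition with $\varphi$, i.e. $\varphi^\ast=\Hom_{\overline{A}}(\varphi,P)$. This is the computational heart, but it is short: if $Y\leftrightarrow\psi_Y\in\Hom_{\overline{A}}(\Lambda^1(\overline{A}),P)$ via $Y(\overline{a})=\psi_Y(d\overline{a})$, then for $a\in A$ we have $(Y|_A)(a)=Y(a)=\psi_Y(da)=\psi_Y(\varphi(1\otimes da))=(\psi_Y\circ\varphi)(1\otimes da)$, and since $1\otimes da$ for $a\in A$ generate $\overline{\Lambda^1(A)}$ over $\overline{A}$ and all maps involved are $\overline{A}$--linear, $Y|_A$ corresponds to $\psi_Y\circ\varphi$. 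Thus $\Pi=\Hom_{\overline{A}}(\varphi,-)$, which is by construction a natural transformation, and ``dual to $\varphi$'' is exactly this assertion. Finally, surjectivity of $\Pi$ (asserted in the preceding proposition) translates, under the duality, into the injectivity of $\varphi$ after passing to $\Hom_{\overline{A}}(-,P)$ for all $P$; alternatively one simply notes that any derivation $X\in D(A,P_\iota)$ extends to $\overline{A}$ because $\overline{A}$ is a smooth envelope, so a chosen extension maps to $X$ under $\Pi$ — but this is the content of the previous proposition and need not be reproved here.

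The main obstacle is bookkeeping rather than depth: one must be careful that $\varphi$ is genuinely well defined (the assignment $\overline{a}\otimes da\mapsto\overline{a}\,da$ respects the $A$--bilinearity built into $\otimes_A$, since for $a'\in A\subset\overline{A}$ one has $\overline{a}a'\otimes da\mapsto \overline{a}a'\,da$ and $\overline{a}\otimes d(a'a)-\overline{a}\otimes a'\,da\mapsto \overline{a}\,d(a'a)-\overline{a}a'\,da=\overline{a}a\,da'$ — so one actually needs the Leibniz rule in $\Lambda^1(\overline{A})$ to match, which it does), and that the representability identification $D(A,(\cdot)_\iota)\cong\Hom_{\overline{A}}(\overline{\Lambda^1(A)},-)$ is the one making the remark before the lemma literally true. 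Once those two points are pinned down, the naturality square commutes automatically because everything is expressed through $\Hom_{\overline{A}}$ applied to the single fixed morphism $\varphi$.
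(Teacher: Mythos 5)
Your argument is correct: identifying $D(\overline{A},P)\cong\Hom_{\overline{A}}(\Lambda^1(\overline{A}),P)$ and $D(A,P_\iota)\cong\Hom_{\overline{A}}(\overline{\Lambda^1(A)},P)$ via the representability noted just before the lemma, and then checking on the generators $1\otimes da$ that restriction of a derivation corresponds to precomposition with $\varphi$, is exactly what \emph{dual} means here, and your well--definedness check for $\varphi$ is also sound. The paper states this lemma without any proof at all, so your write-up simply supplies the routine verification the author left implicit; there is no divergence of approach and no gap to report.
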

Since $|\iota|$ is, in fact, the identity of $|A|$, one may expect that \emph{all} derivations of $\overline{A}$ are derivations along the smooth envelope, i.e., that $\varphi$ is a natural isomorphism.
 However, in general, this is not the case.\par
Observe that invertibility of $\varphi$ means the presence of a connection in the smooth envelope, i.e., a natural right inverse
 \begin{equation}\label{EqProjFunctINV}
D(A,\,(\cdot)_\iota\,)\stackrel{\nabla}{\longrightarrow } D(\overline{A},\cdot\,)  
\end{equation}
of $\Pi$, which allow to lift a derivation $X$ of $A$ to a derivation $\nabla_X$ of $\overline{A}$ (see, for instance, the book \cite{DeParis}, or \cite{Kra1998}).
\subsection{1--forms on smoothened product algebras}
Let $M$ and $N$ be smooth manifolds.
\begin{proposition}
If $A=C^\infty(M)\otimes_\R C^\infty(N)$, then $\overline{\Lambda^1(A)}$ is the module of sections of a smooth vector bundle over $M\times N$ of dimension $\dim(M)\dim(N)$.
\end{proposition}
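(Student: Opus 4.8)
The plan is to realize $\overline{\Lambda^1(A)}$ as the smoothened tensor product of the cotangent modules of the two factors and then simply read off the rank. Write $L_M\df\Lambda^1(C^\infty(M))$ and $L_N\df\Lambda^1(C^\infty(N))$ for the modules of smooth $1$--forms on the two factors; by the Swan--Serre correspondence these are finitely generated projective, and locally free of ranks $\dim(M)$ and $\dim(N)$ over $C^\infty(M)$ and $C^\infty(N)$ respectively. The central step is to establish a canonical isomorphism of $\overline{A}$--modules $\overline{\Lambda^1(A)}\cong L_M\overline{\otimes}_\R L_N$, matching the differentials produced by the two factors against the elementary tensors of $1$--forms. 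I would construct this comparison starting from the generators $d(f\otimes 1)$ and $d(1\otimes g)$ of $\Lambda^1(A)$ for the \emph{algebraic} tensor product $A$, and check that the induced assignment is well defined, compatible with the scalar extension $\overline{A}\otimes_A(\cdot)$ defining the smoothening, and bijective.

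Granting this identification, the rank computation becomes local and elementary. By Proposition \ref{propSmoothenedTenProd} the smoothening converts a $\otimes_\R$ against a full smooth algebra into an honest extension of scalars along $C^\infty(M)\hookrightarrow C^\infty(M\times N)$ or $C^\infty(N)\hookrightarrow C^\infty(M\times N)$. Hence, on a product cover $\{U\times V\}$ of $M\times N$ with $U$ trivializing $L_M$ and $V$ trivializing $L_N$, the module $L_M\otimes_\R L_N$ is free over $A=C^\infty(M)\otimes_\R C^\infty(N)$ on the $\dim(M)\dim(N)$ elementary tensors of the two local frames. By Lemma \ref{LemSmothLocFree}, smoothening preserves local freeness together with the local dimension, so $L_M\overline{\otimes}_\R L_N$, and therefore $\overline{\Lambda^1(A)}$, is locally free over $\overline{A}=C^\infty(M\times N)$ of constant rank $\dim(M)\dim(N)$ on the cover $\{U\times V\}$.

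Finally, invoking Swan--Serre in the reverse direction, a finitely generated locally free $C^\infty(M\times N)$--module of constant rank $r$ is the module of sections of a smooth vector bundle of fibre dimension $r$ over $M\times N$; taking $r=\dim(M)\dim(N)$ gives precisely the assertion. The only genuinely delicate point I expect is the central isomorphism $\overline{\Lambda^1(A)}\cong L_M\overline{\otimes}_\R L_N$: one must verify that the candidate comparison respects \emph{all} the relations imposed on $\Lambda^1(A)$ by the algebraic tensor product—in particular the interaction of the Leibniz rule on cross terms $f\otimes g$ with the smooth chain rule available only inside each factor—and that passing to the smoothening neither enlarges nor collapses the module beyond the predicted rank. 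Once that bookkeeping of generators and relations is secured, Proposition \ref{propSmoothenedTenProd} and Lemma \ref{LemSmothLocFree} reduce the whole statement to the elementary frame count above.
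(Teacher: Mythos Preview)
Your central isomorphism $\overline{\Lambda^1(A)}\cong L_M\overline{\otimes}_\R L_N$ is false, and this is not a bookkeeping issue but a structural one. For a tensor product of $\R$--algebras $A=B\otimes_\R C$ the K\"ahler differentials split as a \emph{direct sum}, not a tensor product:
\[
\Lambda^1(B\otimes_\R C)\;\cong\;\bigl(\Lambda^1(B)\otimes_\R C\bigr)\;\oplus\;\bigl(B\otimes_\R\Lambda^1(C)\bigr),
\]
with $d(f\otimes g)\mapsto (df\otimes g,\,f\otimes dg)$. You already wrote down the correct generators $d(f\otimes 1)$ and $d(1\otimes g)$, but there is no natural place for either of them inside $L_M\otimes_\R L_N$: an element $df\otimes\beta$ requires a distinguished $1$--form $\beta$ on $N$, and none is available. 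The Leibniz relation $d(f\otimes g)=(1\otimes g)\,d(f\otimes 1)+(f\otimes 1)\,d(1\otimes g)$ forces exactly the direct--sum target above.

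Applying the smoothening together with Proposition~\ref{propSmoothenedTenProd} to the correct formula gives
\[
\overline{\Lambda^1(A)}\;\cong\;\bigl(L_M\otimes_{C^\infty(M)}C^\infty(M\times N)\bigr)\;\oplus\;\bigl(C^\infty(M\times N)\otimes_{C^\infty(N)}L_N\bigr),
\]
which by Lemma~\ref{LemSmothLocFree} is locally free of rank $\dim M+\dim N$, not $\dim(M)\dim(N)$. This is, of course, exactly what is needed for the proof of Theorem~\ref{thExtSmEnv}, since $\Lambda^1(\overline{A})=\Lambda^1\bigl(C^\infty(M\times N)\bigr)$ has rank $\dim(M\times N)=\dim M+\dim N$; the product of dimensions in the proposition's statement is evidently a misprint. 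Your use of Swan--Serre and of Lemma~\ref{LemSmothLocFree} at the end is fine, but it has to be fed the direct--sum decomposition rather than the tensor one. A quick sanity check: for $M=N=\R$ your formula would predict a line bundle over $\R^2$, whereas $\Lambda^1(C^\infty(\R^2))$ visibly has rank~$2$.
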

\begin{proof}
 Straightforward. \qed
\end{proof}
\begin{theorem}\label{thExtSmEnv}
If $A=C^\infty(M)\otimes_\R C^\infty(N)$, then the homomorphism $\varphi$ defined by \REF{EqProjFunctDual} is bijective. In particular, $\Lambda^1(\overline{A})\cong\overline{\Lambda^1(A)}$.
\end{theorem}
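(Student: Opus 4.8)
The plan is to unwind both sides and recognise that $\varphi$ is, up to natural identifications, nothing but the canonical splitting of $1$--forms on the product $M\times N$. Two preliminary facts get things going. First, any $\R$--linear derivation of $A=C^\infty(M)\otimes_\R C^\infty(N)$ is, by the Leibniz rule, determined by --- and can be freely assembled from --- its restrictions to the subalgebras $C^\infty(M)\otimes 1$ and $1\otimes C^\infty(N)$; hence $D(A,Q)\cong D(C^\infty(M),Q)\oplus D(C^\infty(N),Q)$ for every $A$--module $Q$, and, by the universal property of $\Lambda^1$, there is a canonical isomorphism of $A$--modules
\[
\Lambda^1(A)\;\cong\;\bigl(\Lambda^1(C^\infty(M))\otimes_\R C^\infty(N)\bigr)\oplus\bigl(C^\infty(M)\otimes_\R\Lambda^1(C^\infty(N))\bigr),
\]
under which $d_A(f\otimes g)$ corresponds to $\bigl((d f)\otimes g\bigr)\oplus\bigl(f\otimes(d g)\bigr)$. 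Second, $\Lambda^1(C^\infty(M))$ is the module of sections $\Gamma(T^{*}M)$ of the cotangent bundle, likewise for $N$ and for $M\times N$, and $\overline{A}=C^\infty(M\times N)$.

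Next I would apply the smoothening functor $\overline{A}\otimes_A(-)$, which commutes with finite direct sums, to the displayed decomposition. The summand $\Lambda^1(C^\infty(M))\otimes_\R C^\infty(N)$, regarded as an $A$--module, smoothens to $\Lambda^1(C^\infty(M))\,\overline{\otimes}_\R\,C^\infty(N)$, which by the module form of Proposition \ref{propSmoothenedTenProd} equals $\Lambda^1(C^\infty(M))\otimes_{C^\infty(M)}C^\infty(M\times N)$ --- that is, the pullback of $\Gamma(T^{*}M)$ along the projection $\pi_M\colon M\times N\to M$, namely $\Gamma(\pi_M^{*}T^{*}M)$. Symmetrically, the second summand smoothens to $\Gamma(\pi_N^{*}T^{*}N)$. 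Hence
\[
\overline{\Lambda^1(A)}\;\cong\;\Gamma(\pi_M^{*}T^{*}M)\oplus\Gamma(\pi_N^{*}T^{*}N)\;=\;\Gamma\bigl(\pi_M^{*}T^{*}M\oplus\pi_N^{*}T^{*}N\bigr),
\]
while $\Lambda^1(\overline{A})=\Gamma(T^{*}(M\times N))$, and the canonical splitting $T^{*}(M\times N)\cong\pi_M^{*}T^{*}M\oplus\pi_N^{*}T^{*}N$ of the cotangent bundle of a product identifies the two as $\overline{A}$--modules (both being the sections of a smooth vector bundle over $M\times N$ of rank $\dim M+\dim N$).

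It remains to check that the composite isomorphism $\Phi\colon\overline{\Lambda^1(A)}\to\Lambda^1(\overline{A})$ just built coincides with the map $\varphi$ of \REF{EqProjFunctDual}. Both are $\overline{A}$--linear, and $\overline{\Lambda^1(A)}$ is generated over $\overline{A}$ by the elements $1\otimes d_A(f\otimes g)$, $f\in C^\infty(M)$, $g\in C^\infty(N)$ (since such $d_A(f\otimes g)$ generate $\Lambda^1(A)$ over $A$), so it suffices to compare the two maps on such an element. Following $1\otimes d_A(f\otimes g)$ through the identifications above produces $\pi_N^{*}g\cdot d(\pi_M^{*}f)+\pi_M^{*}f\cdot d(\pi_N^{*}g)\in\Gamma(T^{*}(M\times N))$, which by the Leibniz rule for $d$ on $C^\infty(M\times N)$ is precisely $d\bigl((x,y)\mapsto f(x)g(y)\bigr)=d_{\overline{A}}(f\otimes g)$; and by definition $\varphi(1\otimes d_A(f\otimes g))=d_{\overline{A}}(f\otimes g)$. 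Thus $\Phi=\varphi$, so $\varphi$ is bijective.

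I expect the only genuine work to be this last bookkeeping: pinning down the isomorphism $\overline{\Lambda^1(A)}\cong\Gamma(\pi_M^{*}T^{*}M\oplus\pi_N^{*}T^{*}N)$ concretely enough to compare it with $\varphi$ --- essentially, applying Proposition \ref{propSmoothenedTenProd} to the module $\Lambda^1(C^\infty(M))$ and verifying that the pullback identification intertwines $d_A$ with $d_{\overline{A}}$. Everything else (commutation of smoothening with $\oplus$, the splitting of derivations of a $\otimes_\R$, the splitting of the cotangent bundle of a product) is routine. If one prefers to bypass the explicit comparison altogether: $\varphi$ is surjective, since its image contains every $d_{\overline{A}}(f\otimes g)=\varphi(1\otimes d_A(f\otimes g))$ and these generate $\Gamma(T^{*}(M\times N))$ over $\overline{A}$ --- the differentials of the functions $f\otimes g\in A$ span each cotangent space of $M\times N$, and $\Gamma(T^{*}(M\times N))$ is finitely generated (Swan--Serre), so finitely many of them suffice --- and combined with the Proposition preceding the statement ($\overline{\Lambda^1(A)}$ and $\Lambda^1(\overline{A})$ being sections of vector bundles over $M\times N$ of the same rank), a surjection between them is automatically bijective.
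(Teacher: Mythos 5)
Your proof is correct, but your primary route is genuinely different from the paper's. The paper argues directly: surjectivity of $\varphi$ follows from the chain rule $d(H\circ\bm{a})=\left(\frac{\partial H}{\partial t^i}\circ\bm{a}\right)da^i$, which shows that the generating set $d\overline{A}$ of $\Lambda^1(\overline{A})$ can be replaced by the smaller set $dA$; injectivity then follows because both modules are sections of vector bundles of the same rank over $M\times N$, so a surjection between them is forced to be bijective. You instead construct the inverse explicitly: you split $\Lambda^1(A)$ via the standard decomposition of the differentials of a tensor product of algebras, push each summand through Proposition \ref{propSmoothenedTenProd} to identify $\overline{\Lambda^1(A)}$ with $\Gamma(\pi_M^{*}T^{*}M\oplus\pi_N^{*}T^{*}N)$, invoke the canonical splitting of $T^{*}(M\times N)$, and check on the generators $1\otimes d_A(f\otimes g)$ that the composite agrees with $\varphi$. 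Your route is longer but more structural: it exhibits the isomorphism rather than deducing its existence, and it actually puts Proposition \ref{propSmoothenedTenProd} to work, which the paper states but never uses in this proof. The paper's route is shorter because the chain rule settles surjectivity in one line. Your closing alternative (surjectivity because the $d_{\overline{A}}(f\otimes g)$ span every cotangent space of $M\times N$, combined with the equal-rank count) is essentially the paper's argument with a fibrewise spanning argument in place of the chain rule. One small point: the rank is $\dim M+\dim N$ as you write; the paper's $\dim(M)\dim(N)$ is evidently a typo, and in any case only the equality of the two ranks matters for the injectivity step.
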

\begin{proof}
Since  $A\subset\overline{A}=C^\infty(M\times N)$, any element $H\circ\bm{a}\in \overline{A}$ is the composition of the two smooth maps $\bm{a}:M\times N\to\R^k$ and $H:\R^k\to\R$. Therefore, by taking the differential of $H\circ\bm{a}$, one gets
\begin{equation}\label{eqChRul1}
d(H\circ\bm{a})=\left(\frac{\partial H}{\partial t^i}\circ\bm{a}\right)da^i.
\end{equation}
Recall  that $\Lambda^1(\overline{A})$ is generated by its subset $d\overline{A}$.    However, 
relation \REF{eqChRul1} allows to reduce   such a set of generators, by replacing it  by   the smaller subset $dA$. 
 Hence,    $\varphi$  is surjective.\par
Both $\Lambda^1(\overline{A})$ and $\overline{\Lambda^1(A)}$   are the modules of sections of a  $\dim(M)\dim(N)$--dimen\-sional vector bundle over $M\times N$, so $\varphi$ must also be injective.\qed
\end{proof}
 Theorem \ref{thExtSmEnv}  contains  a   global statement, whose  local (i.e., fiber--wise) analog is perhaps more intuitive. Namely, if one is interested in   the cotangent space $T_h\overline{A}= {\mu_h}/{\mu_h^2}$ (where $\mu_h\subseteq\overline{A}$), then  it is sufficient to work with the submodule $\Lambda^1(A) \subseteq \Lambda^1(\overline{A})$, instead of the whole   $ \Lambda^1(\overline{A})$.

\subsubsection*{Acknowledgements}
The author is thankful to the   Grant Agency  of the Czech Re\-public (GA \v CR)
for financial support under  the project P201/12/G028.

\end{document}